%
\input ./style/arxiv-vmsta.cfg
\documentclass[numbers,compress,v1.0.1]{vmsta}

\volume{2}
\issue{3}
\pubyear{2015}
\firstpage{219}
\lastpage{232}
\doi{10.15559/15-VMSTA35CNF}


\newtheorem{thm}{Theorem}
\newtheorem{lemma}{Lemma}

\theoremstyle{definition}

\newtheorem{assum}{Assumption}
\newtheorem{remark}{Remark}

\theoremstyle{remark}
\newtheorem{Case}{Case}

\startlocaldefs

\hyphenation{de-si-de-rium}

\urlstyle{rm}
\allowdisplaybreaks
\endlocaldefs

\begin{document}
\begin{frontmatter}

\title{Integral representation with respect to fractional Brownian
motion under a log-H\"older assumption}

\author{\inits{T.}\fnm{Taras}\snm{Shalaiko}}\email{tosha@univ.kiev.ua}
\author{\inits{G.}\fnm{Georgiy}\snm{Shevchenko}\corref
{cor1}}\email
{zhora@univ.kiev.ua}
\cortext[cor1]{Corresponding author.}
\address{Department of Mechanics and Mathematics, Taras Shevchenko
National University of
Kyiv, Volodymyrska 60, 01601 Kyiv, Ukraine}

\markboth{T. Shalaiko, G. Shevchenko}{Integral representation with
respect to fractional Brownian motion}

\begin{abstract}
We show that if a random variable is the final value of an adapted
log-H\"older continuous process, then it can be represented as a
stochastic integral with respect to a fractional Brownian motion with
adapted integrand. In order to establish this representation result, we
extend the definition of the fractional integral.
\end{abstract}

\begin{keyword}
Fractional Brownian motion\sep
integral representation\sep
fractional integral\sep
small deviation
\MSC[2010] 60G22\sep60H05\sep26A33
\end{keyword}

\received{6 September 2015}
\revised{13 September 2015}
\accepted{14 September 2015}
\publishedonline{25 September 2015}
\end{frontmatter}

\section{Introduction}\label{sec1}
This paper can be considered as a continuation of the research started
in \cite{valkeila, mish-shev,shev,shevadapt,vita}. Namely, we
are interested in representations of a random variable in the form of a
stochastic integral
%
%
\begin{equation}
\label{repr} \xi=\int_0^1 \psi(s)\mathrm{d}
B^H(s)
\end{equation}
with respect to a fractional Brownian motion with Hurst parameter $H >
1/2$; the integrand $\psi$ is assumed to be
adapted to the natural filtration generated by~$B^H$ on the interval
$[0,1]$. The motivation comes from financial
mathematics, where capitals of self-financing strategies are given by
stochastic integrals with respect to asset pricing processes.

The main representation results reported in \cite{valkeila,
mish-shev,shev,shevadapt,vita} involve the following assumption about
$\xi$: it is the value at $1$ of an adapted H\"older-continuous
process. Moreover, it was shown in \cite{mish-shev} (see Remark~2.5)
that such an assumption is unavoidable in the methods used in the cited papers.

In this paper, we generalize the existing results by showing the
existence of representation \eqref{repr} under a weaker assumption that
$\xi$ is the value at $1$ of an~adapted \textit{log-H\"older}
continuous process. In order to establish such a representation, we
extend the definition of the fractional integral introduced in~\cite{zahle}.

The paper is organized as follows. Section~\ref{sec2} gives all necessary
prerequisites about the fractional Brownian motion and the definition
of the extended fractional integral. In Section~\ref{sec3}, we prove
the main
representation result. The Appendix contains auxiliary results
concerning the extended fractional integral.
\section{Preliminaries}\label{sec2}
\subsection{General conventions}\label{sec2.1}
Let $(\varOmega,\mathcal F, \mathbb F=\{ \mathcal F_t\}_{t\in[0,1]}, P)$
be a standard stochastic basis. The adaptedness of processes will be
understood with respect to the filtration $\mathbb{F}$.

Throughout the paper, the symbol $C$ will mean a generic constant, the
value of which may change from line to line. When the dependence on
some parameter(s) is important, this will be indicated by superscripts.
A finite random variable of no importance will be denoted by $C(\omega)$.

\subsection{Fractional Brownian motion}\label{sec2.2}

Our main object of interest in this paper is a fractional Brownian
motion (fBm) with Hurst index $H\in(1/2,1)$ on $(\varOmega,\mathcal F,
\mathsf{P})$, that is,\ an $\mathbb{F}$-adapted centered Gaussian
process $B^H=\{B^H(t)\}_{t\geq0}$ with the covariance function
\[
R_H(t,s)=\mathsf{E}\big[B^H(t)B^H(s)\big]=\frac{1} 2
\bigl( t^{2H}+s^{2H}-|t-s|^{2H}\bigr).
\]
By the Kolmogorov--Chentsov theorem, $B^H$ has a continuous
modification, so in what follows, we assume that $B^H$ is continuous.
Moreover, we will need the following statement on the uniform modulus
of continuity of $B^H$ (see, e.g.,~\cite{mishura}).
\begin{thm}\label{unif_modulus}
For an fBm $B^H$, we have
\[
\sup_{t,s\in[0,1]}\frac{|B^H(t)-B^H(s)|}{|t-s|^{H}|\log
(t-s)|^{1/2}} <\infty\, \text{ almost surely.}
\]
\end{thm}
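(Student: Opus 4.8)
The plan is to use the Garsia--Rodemich--Rumsey (GRR) inequality with an exponential Young function, which is tailored to the Gaussian tails of the increments of $B^H$. The single probabilistic input is that for $t\neq s$ the increment $B^H(t)-B^H(s)$ is centered Gaussian with variance $\mathsf{E}[(B^H(t)-B^H(s))^{2}]=R_H(t,t)+R_H(s,s)-2R_H(t,s)=|t-s|^{2H}$, so that the normalized increment $(B^H(t)-B^H(s))/|t-s|^{H}$ is standard normal. The content of the theorem lies in the regime $|t-s|\to0$, where both numerator and denominator vanish, and it is this regime that GRR controls.

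Concretely, I would apply GRR with $\Psi(x)=e^{\lambda x^{2}}-1$ for a fixed $\lambda\in(0,1/2)$ and $p(u)=u^{H}$, whose inverse is $\Psi^{-1}(y)=(\lambda^{-1}\log(1+y))^{1/2}$. The inequality states that if
\[
B:=\int_0^1\!\!\int_0^1\Psi\!\left(\frac{|B^H(u)-B^H(v)|}{p(|u-v|)}\right)\mathrm{d}u\,\mathrm{d}v<\infty,
\]
then for all $t,s$
\[
|B^H(t)-B^H(s)|\le 8\int_0^{|t-s|}\Psi^{-1}\!\left(\frac{4B}{u^{2}}\right)\mathrm{d}p(u).
\]
To see that $B<\infty$ almost surely it suffices to check $\mathsf{E}[B]<\infty$, and by Tonelli this reduces to a single exponential moment: since $(B^H(u)-B^H(v))/|u-v|^{H}$ is standard normal,
\[
\mathsf{E}\!\left[\Psi\!\left(\frac{|B^H(u)-B^H(v)|}{|u-v|^{H}}\right)\right]=\mathsf{E}\big[e^{\lambda Z^{2}}\big]-1=(1-2\lambda)^{-1/2}-1,
\]
which is finite precisely because $\lambda<1/2$; integrating this constant over $[0,1]^{2}$ gives $\mathsf{E}[B]<\infty$.

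It then remains to extract the claimed modulus from the pointwise GRR bound. Writing $\delta=|t-s|$ and $\mathrm{d}p(u)=Hu^{H-1}\mathrm{d}u$, for small $u$ one has $\log(1+4B/u^{2})\le\log(8B)+2\log(1/u)$, so by subadditivity of the square root
\[
\Psi^{-1}\!\left(\frac{4B}{u^{2}}\right)\le\lambda^{-1/2}\big(\log(8B)\big)^{1/2}+\lambda^{-1/2}\big(2\log(1/u)\big)^{1/2}.
\]
Plugging this in and using $\int_0^{\delta}u^{H-1}\mathrm{d}u=\delta^{H}/H$ together with the change of variables $u=\delta v$ and the elementary bound $\int_0^1 v^{H-1}(\log(1/v))^{1/2}\mathrm{d}v<\infty$, I expect to obtain
\[
|B^H(t)-B^H(s)|\le C(\omega)\,\delta^{H}\big(\log(1/\delta)\big)^{1/2}
\]
for all sufficiently small $\delta$, which is exactly the asserted bound.

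The main obstacle is the bookkeeping in this last integral estimate: one must confirm that the $\log(8B)$ contribution is genuinely lower order (it produces a term of size $\delta^{H}$, negligible against $\delta^{H}(\log(1/\delta))^{1/2}$ as $\delta\to0$) and that the $(\log(1/u))^{1/2}$ factor integrates against $u^{H-1}$ to reproduce the single power $(\log(1/\delta))^{1/2}$ rather than a larger one. A conceptually equivalent route avoiding GRR would be a dyadic chaining argument: bound the maximum of the $2^{n}$ increments over the level-$n$ dyadic grid by $C2^{-nH}\sqrt{n}$ via a Gaussian tail estimate and Borel--Cantelli, then telescope to arbitrary $t,s$; this yields the same modulus but demands the same care in summing the dyadic scales.
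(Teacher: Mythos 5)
Your proposal cannot be compared line by line with the paper's argument for a simple reason: the paper does not prove Theorem~\ref{unif_modulus} at all, it quotes the result from the literature (``see, e.g., \cite{mishura}''). So what you have written is a genuinely different contribution, namely a self-contained proof, and it is essentially correct. The GRR route with $\Psi(x)=e^{\lambda x^{2}}-1$, $\lambda\in(0,1/2)$, $p(u)=u^{H}$ is the standard way to obtain this modulus for Gaussian processes: the only probabilistic input is $\mathsf{E}[(B^H(t)-B^H(s))^{2}]=|t-s|^{2H}$, Tonelli gives $\mathsf{E}[B]=(1-2\lambda)^{-1/2}-1<\infty$, and your final bookkeeping is sound --- the logarithm of the random constant contributes only $C(\omega)\delta^{H}$, which is dominated by $\delta^{H}(\log(1/\delta))^{1/2}$ for small $\delta$, and since $\int_0^1 v^{H-1}(\log(1/v))^{1/2}\,\mathrm{d}v<\infty$, the factor $(\log(1/u))^{1/2}$ integrated against $u^{H-1}$ reproduces a single power $(\log(1/\delta))^{1/2}$ and nothing worse. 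Two small repairs would tighten it: (i) use $\log(1+4B/u^{2})\le\log(1+4B)+2\log(1/u)$ for $u\le1$ instead of $\log(8B)+2\log(1/u)$, which removes the restriction $u^{2}\le 4B$ and avoids a negative quantity under the square root when $B(\omega)$ is small; (ii) GRR requires a continuous path, which is available because the paper has already fixed the continuous modification of $B^H$. Finally, your restriction to ``sufficiently small $\delta$'' is not a gap but the correct reading of the statement: taken literally, the supremum includes pairs with $|t-s|$ close to $1$, where $|\log(t-s)|^{1/2}\to0$ while the increment does not vanish, so the literal supremum is a.s.\ infinite; the theorem (and its later use in the paper, e.g.\ the bound on $D^{1-\alpha}_{1-}B^H_{1-}$) must be understood as a local uniform modulus, with the log factor effective only for small increments, which is exactly what you prove. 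Compared with the paper's bare citation, your argument costs a page but makes visible where the exponent $1/2$ on the logarithm comes from (the square in the Gaussian exponential); the dyadic chaining alternative you sketch would serve equally well.
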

%
\subsection{Small deviations of sum of squared increments of fractional
Brownian motion}\label{sec2.3}
First, we state a small deviation estimate for sum of squares of
Gaussian random variables (see, e.g.,\ \cite{lishao}).\vadjust{\eject}
\begin{lemma}
\label{small}
Let $\{\xi_i\}_{i=1,\ldots,n}$ be jointly Gaussian centered random
variables. Then for all $x$ such that $0<x<\sum_{i=1}^n \mathsf E [\xi^2_i]$,
we have
\begin{gather*}
\mathsf P\left\{\sum_{i=1}^n\xi_i^2 \le x \right\}\leq\exp\biggl
(-\frac{(x-\sum_{i=1}^n \mathsf E [\xi_i^2])^2}{\sum_{i,j=1}^n
(\mathsf E [\xi_i\xi_j])^2} \biggr).
\end{gather*}
\end{lemma}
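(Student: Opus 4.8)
The plan is to use the exponential Markov inequality (Chernoff bound) for the lower tail, combined with an exact formula for the Laplace transform of the quadratic form $\sum_{i=1}^n\xi_i^2$. For any $\lambda>0$, the event $\bigl\{\sum_{i=1}^n\xi_i^2\le x\bigr\}$ coincides with $\bigl\{e^{-\lambda\sum_{i=1}^n\xi_i^2}\ge e^{-\lambda x}\bigr\}$, so Markov's inequality gives
\[
\mathsf P\left\{\sum_{i=1}^n \xi_i^2 \le x\right\} \le e^{\lambda x}\,\mathsf E\left[\exp\left(-\lambda\sum_{i=1}^n\xi_i^2\right)\right].
\]
In this way everything reduces to controlling the Laplace transform of the quadratic form on the right and then optimizing over $\lambda$.

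Next I would compute this Laplace transform explicitly. Writing $\Sigma=(\mathsf E[\xi_i\xi_j])_{i,j=1}^n$ for the covariance matrix of the Gaussian vector $(\xi_1,\dots,\xi_n)$ and letting $\rho_1,\dots,\rho_n\ge0$ denote its eigenvalues, a diagonalization (representing the vector as $\Sigma^{1/2}$ applied to a standard Gaussian vector, which turns the quadratic form into a weighted sum of independent squared standard normals) yields
\[
\mathsf E\left[\exp\left(-\lambda\sum_{i=1}^n\xi_i^2\right)\right] = \prod_{i=1}^n (1 + 2\lambda\rho_i)^{-1/2}.
\]
Taking logarithms and applying the elementary inequality $\log(1+u)\ge u-u^2/2$, valid for $u\ge0$, to each factor with $u=2\lambda\rho_i$ gives
\[
-\frac12\sum_{i=1}^n\log(1+2\lambda\rho_i) \le -\lambda\sum_{i=1}^n\rho_i + \lambda^2\sum_{i=1}^n\rho_i^2 .
\]
Here I would identify the two spectral sums with the quantities in the statement via $\sum_i\rho_i=\mathrm{tr}\,\Sigma=\sum_{i=1}^n\mathsf E[\xi_i^2]$ and $\sum_i\rho_i^2=\mathrm{tr}\,\Sigma^2=\sum_{i,j=1}^n(\mathsf E[\xi_i\xi_j])^2$. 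Substituting back and minimizing the resulting exponent in $\lambda$ over $\lambda>0$ (the minimizer is explicit and positive precisely because $x<\sum_i\mathsf E[\xi_i^2]$) produces an exponential small-deviation estimate of the form asserted.

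The main obstacle I anticipate is the computation of the Laplace transform: justifying the closed product form rigorously, in particular handling a possibly degenerate (singular) covariance matrix, where some $\rho_i=0$, and making the simultaneous diagonalization precise. The remaining steps form a routine Chernoff optimization once the logarithmic inequality is available; the only point requiring care there is that $2\lambda\rho_i\ge0$, so that $\log(1+u)\ge u-u^2/2$ is applied in the correct direction, and that the trace identity $\mathrm{tr}\,\Sigma^2=\sum_{i,j}(\mathsf E[\xi_i\xi_j])^2$ is used to match the denominator exactly.
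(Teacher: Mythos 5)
Before comparing: the paper offers no proof of Lemma~\ref{small} at all --- it is quoted from the survey of Li and Shao \cite{lishao}, and the Appendix proves only Lemma~\ref{fbm-estim}. So your attempt can only be judged on its own merits. Your strategy (exponential Chebyshev for the lower tail, the exact Laplace transform $\mathsf{E}[\exp(-\lambda\sum_i\xi_i^2)]=\prod_i(1+2\lambda\rho_i)^{-1/2}$ via diagonalization of the covariance matrix, the bound $\log(1+u)\ge u-u^2/2$ for $u\ge0$, the trace identities, and optimization in $\lambda>0$) is the standard route to such estimates, and each individual step you outline is correct; the degeneracy issue you worry about is harmless, since $\xi\stackrel{d}{=}\Sigma^{1/2}Z$ holds for any positive semidefinite $\Sigma$ and zero eigenvalues contribute factors $1$ to the product.

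The genuine problem is your last step, where you assert the optimization ``produces an estimate of the form asserted.'' It does not. Writing $\sigma^2=\sum_i\mathsf{E}[\xi_i^2]$ and $\tau^2=\sum_{i,j}(\mathsf{E}[\xi_i\xi_j])^2$, your exponent $\lambda(x-\sigma^2)+\lambda^2\tau^2$ is minimized at $\lambda^*=(\sigma^2-x)/(2\tau^2)$ and equals $-(\sigma^2-x)^2/(4\tau^2)$, so you obtain the bound with denominator $4\sum_{i,j}(\mathsf{E}[\xi_i\xi_j])^2$, a factor $4$ weaker than stated. Moreover, no refinement of your method can close this gap, because the inequality exactly as printed in the paper is false: for $\xi_1,\dots,\xi_{10}$ i.i.d.\ standard normal and $x=5$ one has $\mathsf{P}\{\chi^2_{10}\le5\}\approx0.109$, while $\exp(-(5-10)^2/10)=e^{-2.5}\approx0.082$. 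Asymptotically, Cram\'er's theorem gives $\mathsf{P}\{\chi^2_n\le yn\}=\exp(-n(y-1-\log y)/2+o(n))$, and $(y-1-\log y)/2\sim(1-y)^2/4$ as $y\to1$, so $4$ is in fact the optimal constant (it matches $\mathrm{Var}(\sum_i\xi_i^2)=2\tau^2$). In short, your argument proves the correct version of the lemma, and the defect lies in the statement as reproduced in the paper (presumably a slip in transcribing the constant from \cite{lishao}); the discrepancy is immaterial for the paper itself, since Lemma~\ref{small-ball-fbm} only requires a bound of the form $\exp\{-C_H(1-\alpha)^2r(n)\}$ with a generic constant, into which the factor $4$ is absorbed. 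But a write-up claiming to establish the lemma verbatim must flag this, and yours glosses over exactly that point.
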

We will also need the following asymptotics of the covariance of a
fractional Brownian motion. Its proof is given in the Appendix.
\begin{lemma}
\label{fbm-estim}
Let $H\in(1/2,1)$. Set $\Delta B^{H}_i=B^H(i+1)-B^H(i)$ for
$i=0,\ldots
,n-1$. Then the following relation holds as $n\to\infty$:
\begin{gather*}
\sum_{i,j=0}^{n-1} \bigl( \mathsf{E}\big[\Delta
B^H_i \Delta B^H_j\big]
\bigr)^2\sim C_H %
\begin{cases}
n, &H\in(1/2,3/4),\\
n\log n, & H=3/4,\\
n^{4H-2}, & H\in(3/4,1).
\end{cases}
\end{gather*}
%
\end{lemma}

Lemmas~\ref{small} and \ref{fbm-estim} imply the following small
deviation estimate for the sum
of squares of fBm increments.
\begin{lemma}
\label{small-ball-fbm}
Let $B^H=\{B^H_t\}_{t\geq0}$ be an fBm with Hurst index $H>1/2$, $n\ge
2$, and let $\{\Delta B^H_k\}_{k=0,\ldots, n-1}$ be as before. For all
$\alpha\in(0,1)$, we have
\begin{gather*}
\mathsf P\left\{\sum_{k=0}^{n-1}\bigl(\Delta B^H_k\bigr)^2\le
\alpha n \right\}\leq\exp\bigl\{
-C_H(1-\alpha)^2 r(n) \bigr\},
\end{gather*}
where
\[
r(n)= %
\begin{cases}
n, & H \in(1/2,3/4),\\
n/\log n, & H = 3/4,\\
n^{4-4H}, & H \in(3/4,1).
\end{cases}
\]
%
\end{lemma}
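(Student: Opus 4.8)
The plan is to combine the two preceding lemmas directly. Lemma~\ref{small} gives a small deviation bound for sums of squares of jointly Gaussian variables, and the increments $\Delta B^H_k = B^H(k+1)-B^H(k)$ are exactly such variables. So I would apply Lemma~\ref{small} with $\xi_i = \Delta B^H_i$, $n$ terms, and the threshold $x = \alpha n$.

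First I would compute the two quantities appearing in Lemma~\ref{small}. By stationarity of fBm increments, $\mathsf E[(\Delta B^H_i)^2] = 1$ for every $i$, so $\sum_{i=0}^{n-1}\mathsf E[(\Delta B^H_i)^2] = n$. Since $0<\alpha<1$, the threshold $x=\alpha n$ satisfies $0 < x < n$, so the hypothesis of Lemma~\ref{small} holds and the numerator becomes $(x - n)^2 = (\alpha n - n)^2 = (1-\alpha)^2 n^2$. For the denominator I would invoke Lemma~\ref{fbm-estim}, which states that $\sum_{i,j=0}^{n-1}(\mathsf E[\Delta B^H_i \Delta B^H_j])^2 \sim C_H\, g(n)$, where $g(n)$ equals $n$, $n\log n$, or $n^{4H-2}$ in the three respective regimes.

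Substituting into the exponent of Lemma~\ref{small}, the bound becomes
\[
\mathsf P\left\{\sum_{k=0}^{n-1}(\Delta B^H_k)^2 \le \alpha n\right\} \le \exp\left(-\frac{(1-\alpha)^2 n^2}{\sum_{i,j}(\mathsf E[\Delta B^H_i\Delta B^H_j])^2}\right).
\]
Using the asymptotics, $n^2/g(n)$ is of order $n$, $n/\log n$, and $n^{4-4H}$ respectively, which is precisely $r(n)$. Absorbing the constant $C_H$ from Lemma~\ref{fbm-estim} (and adjusting for the fact that the asymptotic relation is only valid for large $n$) yields the claimed estimate $\exp\{-C_H(1-\alpha)^2 r(n)\}$.

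The main point requiring care is the passage from the asymptotic equivalence in Lemma~\ref{fbm-estim} to a uniform bound valid for all $n\ge 2$. The relation $\sum_{i,j}(\cdots)^2 \sim C_H\, g(n)$ only controls large $n$; for small $n$ one must argue that the ratio $\sum_{i,j}(\cdots)^2 / g(n)$ stays bounded, so that a single generic constant $C_H$ (whose value may change as per the paper's convention) suffices across the whole range. I expect this bookkeeping of constants, rather than any deep estimate, to be the only genuine obstacle; the rest is a direct substitution.
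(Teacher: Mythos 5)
Your proposal is correct and follows exactly the route the paper intends: the paper gives no separate proof of Lemma~\ref{small-ball-fbm}, stating only that Lemmas~\ref{small} and \ref{fbm-estim} imply it, which is precisely your substitution $\xi_i=\Delta B^H_i$, $x=\alpha n$, $\sum_i \mathsf{E}[(\Delta B^H_i)^2]=n$, with the denominator controlled by Lemma~\ref{fbm-estim}. Your closing remark about upgrading the asymptotic equivalence to a uniform bound for all $n\ge 2$ (finitely many small $n$, generic constant $C_H$) is the right way to handle the only loose end.
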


\subsection{Extended fractional integral}\label{sec2.4}
To integrate with respect to a fractional Brownian motion, we use the
fractional integral introduced in \cite{zahle}, but the definition is
modified according to our purposes.
For functions $f,g\colon[a,b] \to\mathbb R$ and
$\alpha\in(0,1)$, define the fractional Riemann--Liouville derivatives
(in Weyl form)
\begin{gather*}
\bigl(D_{a+}^{\alpha}f \bigr) (x)=\frac{1}{\varGamma(1-\alpha)}
\Biggl(
\frac
{f(x)}{(x-a)^\alpha}+\alpha\int_{a}^x
\frac{f(x)-f(u)}{(x-u)^{\alpha+1}}\mathrm{d}u \Biggr),
\\
\bigl(D_{b-}^{1-\alpha}g \bigr) (x)=\frac{e^{-i\pi
\alpha}}{\varGamma(\alpha)} \Biggl(
\frac{g(x)}{(b-x)^{1-\alpha}}+(1-\alpha) \int_{x}^b
\frac{g(x)-g(u)}{(u-x)^{2-\alpha}}\mathrm{d}u \Biggr).
\end{gather*}
Then the fractional integral $\int_a^bf(x)\mathrm{d}g(x)$ can be
defined as
%
%
\begin{equation}
\label{ls-int} \int_a^bf(x)\mathrm{d}
g(x)=e^{i\pi\alpha}\int_a^b
\bigl(D_{a+}^{\alpha}f \bigr) (x) \bigl(D_{b-}^{1-\alpha}g_{b-}
\bigr) (x)\mathrm{d}x,
\end{equation}
provided that the last integral is finite. However, in order to have
good properties of this integral (independence of $\alpha$,
additivity), we need to assume more than just the finiteness of the
integral; see the Appendix for details.

Now we turn to the integration with respect to a fractional Brownian
motion. We will restrict our exposition to the interval $[0,1]$, which
suffices for our purposes. Fix a number $\alpha\in(1-H,1/2)$. Note that
from Theorem \ref{unif_modulus} it is easy to derive the following estimate:
\[
\big\vert D^{1-\alpha}_{1-} B^H_{1-}(x)\big\vert\leq C(
\omega)|1-x|^{H+\alpha-1}\big|\log(1-x)\big|^{1/2}.
\]
For some $\mu>1/2$, define the weight
\[
\rho(x) = (1-x)^{H+\alpha-1} \big|\log(1-x)\big|^{\mu}.
\]
Let a function $f\colon[0,1]\to\mathbb R$ be such that
$D_{0+}^{\alpha
}f\in
L^1([0,1],\rho)$; this will be our class of admissible integrands. Then
the \textit{extended fractional integral}
%
%
\begin{equation}
\label{BH-int} \int_{0}^{1}f(x) \mathrm{d}
B^H(x) = e^{i\pi\alpha}\int_0^1
\bigl(D_{0+}^{\alpha
}f \bigr) (x) \bigl(D_{1-}^{1-\alpha}B^H_{1-}
\bigr) (x)\mathrm{d}x
\end{equation}
is well defined (see the Appendix). In particular, it is possible to
take $f$ with $D_{0+}^{\alpha}f\in L^1[0,1]$, and for such integrands,
the definition agrees with the definition of the fractional integral
given in \cite{zahle}; see Remark on p.~340.

Furthermore, it is shown in the Appendix that if $f$ satisfies the
above assumption for a different value of $\alpha$, the value of the
extended fractional integral will be the same. The following estimate
is obvious:
\[
\bigg\vert\int_{0}^{1}f(x) \mathrm{d}B^H(x)\bigg\vert\le
C(\omega) \big\lVert D^{\alpha }_{0+}f\big\rVert_{L^1([0,1],\rho)}.
\]

For each $t\in(0,1)$, we will define the integral $\int_0^t f(x)
\mathrm{d}
B^H(x)$ by a similar formula understanding it in the sense of \cite
{zahle} since $D_{0+}^{\alpha}f\in L^1[0,t]$, $D_{t-}^{1-\alpha
}B^H_{t-}\in L^\infty[0,t]$. Under the additional assumption that
$D_{t+}^{\alpha}f\in L^1([t,1],\rho)$, we can define the integral
$\int_t^1 f(x)\mathrm{d}B^H(x)$ similarly to \eqref{BH-int}, and the
additivity holds:
\[
\int_0^1f(x)\mathrm{d}B^H(x)=\int
_0^t f(x)\mathrm{d}B^H(x)+\int
_{t}^1 f(x)\mathrm{d}B^H(x).
\]
Note that the additivity
\[
\int_0^tf(x)\mathrm{d}B^H(x)=\int
_0^s f(x)\mathrm{d}B^H(x)+\int
_{s}^t f(x)\mathrm{d}B^H(x)
\]
for $s<t$ follows from the results of \cite{zahle}.

Finally, it is worth to add that for $f\in C^\gamma[0,1]$ with $\gamma
>1-H$, the extended fractional integral is well defined since the
derivative $D_{0+}^\alpha f$ is bounded for any \mbox{$\alpha<\gamma$}, and
thus we can take $\alpha\in(1-H,\gamma)$ in the definition. The value
of the integral agrees with the so-called Young integral, which is
given by a limit of integral sums. An important example is $f(x) =
g(B^H(x))$ where $g$ is Lipschitz continuous. In this case, the
following change-of-variable formula holds:
\[
\int_a^b g\bigl(B^H(x)\bigr)\mathrm{d}
B^H(x) = G(b) - G(a),
\]
where $G(x) = \int_0^x g(y)\mathrm{d}y$. The formula appears to be
valid (with
the integral defined in the sense of \cite{zahle}) even for functions
$h$ of locally bounded variation; see \cite{amv}. However, Lipschitz
continuity (even continuous differentiability) will suffice for our purposes.

\section{Main result}\label{sec3}
In this section, for a given $\mathcal F_1$-measurable random variable
$\xi$, we construct an $\mathbb F$-adapted process $\psi=\{\psi(t)\}
_{t\in[0,1]}$ such that
\eqref{repr} holds almost surely
under the following ``log-H\"older'' assumption on $\xi$.

\begin{assum}\label{assump1}
There exists an $\mathbb F$-adapted process $\{Z(t)\}_{t\in[0,1]}$ such
that $Z(1)=\xi$ and, for some $a>1$,
%
%
\begin{equation}
\label{log_hold} \big|Z(1)-Z(t)\big|\leq C(\omega)\big|\log(1-t)\big|^{-a}
\end{equation}
for all $t\in[0,1)$.
\end{assum}

\begin{remark}
Obviously, the process $Z$ satisfies
%
%
\begin{equation}
\label{hold} \big|Z(1)-Z(t)\big|\leq C(\omega) (1-t)^{b}
\end{equation}
for any $b>0$. So \eqref{log_hold} is weaker than \eqref{hold}, which
is the assumption made in \cite{valkeila}.

In \cite{valkeila}, the following example of a random variable not
satisfying \eqref{hold} was given. Assume that $\mathbb F = \{\mathcal
F_t=\sigma(B_s^H,s\in[0,t])\}_{t\in[0,1]}$, and let $\xi= \int
_{1/2}^1 g(t){dW_t}$, where $g(t) = (1-t)^{-1/2}\vert\log(1-t)\vert^{-1}$,
and $W$ is a Wiener process such that its natural filtration coincides
with $\mathbb F$. Using the same argument as in \cite{valkeila}, it is
possible to show that $\xi$ does not satisfy \eqref{log_hold} as well.
However, if we take the same construction with $g(t) =
(1-t)^{-1/2}\vert\log(1-t)\vert^{-d}$, $d>1$, then the corresponding
random variable
satisfies \eqref{log_hold}, but not \eqref{hold}.
\end{remark}

Next, we state a helpful lemma from \cite{valkeila}, used in our
construction of $\psi$.
\begin{lemma}
\label{lemma-inft}
There exists an $\mathbb{F}$-adapted process $\phi$ on $[0,1]$ such
that for any $t>0$, $D^\alpha_{0+}\phi\in L^1[0,t]$, so that the integral
$\int_0^t \phi(s)\mathrm{d}B^H(s)$
is defined as the fractional integral, and
\begin{equation*}
\lim_{t\to1-} \int_0^t \phi(s)
\mathrm{d}B^H(s) = +\infty
\end{equation*}
almost surely.
\end{lemma}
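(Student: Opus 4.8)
The plan is to construct $\phi$ explicitly as a sum of suitably scaled fBm increments over a partition of $[0,1)$ that accumulates at $1$, using the small deviation estimate of Lemma~\ref{small-ball-fbm} to force divergence. First I would choose a sequence of points $t_n \uparrow 1$, say $t_n = 1 - 2^{-n}$, so that on each interval $[t_n, t_{n+1})$ the fBm has a definite scale. On each such interval I would place a piecewise-constant integrand whose value is measurable with respect to $\mathcal F_{t_n}$ (so that adaptedness is preserved), chosen so that the contribution $\int_{t_n}^{t_{n+1}} \phi\,\mathrm{d}B^H$ equals a positive constant, say $1$, whenever the sum of squared increments of $B^H$ on a fine subdivision of $[t_n,t_{n+1}]$ exceeds a threshold, and equals $0$ otherwise. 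Since $\phi$ is piecewise constant, $D^\alpha_{0+}\phi \in L^1[0,t]$ for every $t < 1$ and the integral $\int_0^t \phi\,\mathrm{d}B^H$ reduces to a genuine Riemann--Stieltjes-type fractional integral, which on each completed block just adds the block's value.

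The heart of the argument is a Borel--Cantelli estimate. For each block I would introduce a uniform subdivision of $[t_n,t_{n+1}]$ into $N_n$ equal subintervals and consider the event $E_n$ that the normalized sum of squared increments on that subdivision is small, i.e. below $\alpha N_n$ for some fixed $\alpha \in (0,1)$. By the self-similarity and stationarity of increments of $B^H$, this event has the same probability as the event in Lemma~\ref{small-ball-fbm} with $n$ replaced by $N_n$, so $\mathsf P(E_n) \le \exp\{-C_H(1-\alpha)^2 r(N_n)\}$. I would then choose $N_n$ growing fast enough (its growth depending on the regime of $H$, through $r$) that $\sum_n \mathsf P(E_n) < \infty$; for instance in the hardest regime $H \in (3/4,1)$ one needs $r(N_n) = N_n^{4-4H} \to \infty$ fast, which forces $N_n$ to grow super-polynomially, but this is harmless since the $N_n$ are free parameters. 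By Borel--Cantelli, almost surely only finitely many $E_n$ occur, so for all large $n$ the sum of squared increments on block $n$ is at least $\alpha N_n$, which guarantees each such block contributes at least $1$ to the integral. Summing over $n$ yields divergence to $+\infty$.

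The main obstacle I expect is the bookkeeping that ties the size of the sum of squared increments on block $n$ to a controllable, \emph{positive} contribution of $\int_{t_n}^{t_{n+1}}\phi\,\mathrm{d}B^H$, while keeping $\phi$ adapted and keeping $D^\alpha_{0+}\phi$ integrable near each $t_n$. The natural device is to take $\phi$ on the $n$th block to be (a multiple of) the increment $\Delta B^H$ over the previous sub-subinterval, so that the elementary fractional integral of $\phi$ against $\mathrm dB^H$ reproduces $\sum (\Delta B^H)^2$ up to scaling --- this is exactly the quantity Lemma~\ref{small-ball-fbm} controls, and adaptedness holds because each squared increment is nonnegative and measurable with respect to the left endpoint's $\sigma$-algebra after a one-step lag. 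One must also confirm that the fractional-integral value on each block agrees with this sum of squares; since on a block $\phi$ is a finite step function this is just the additivity of the integral over the sub-subintervals together with the change-of-variable discussion in Section~\ref{sec2.4}. Finally, the almost-sure finiteness of $D^\alpha_{0+}\phi$ on $[0,t]$ for each fixed $t<1$ is immediate because $\phi$ is bounded and piecewise constant on $[0,t]$, so only finitely many blocks are involved.
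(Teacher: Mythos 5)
Your overall strategy --- blocks accumulating at $1$, on each block an integrand whose integral against $B^H$ reproduces a sum of squared fBm increments, then Lemma~\ref{small-ball-fbm} plus Borel--Cantelli to force all but finitely many blocks to contribute a fixed positive amount --- is the right one; the paper itself does not prove this lemma (it quotes it from \cite{valkeila}), but that is how the cited proof runs, and the same device reappears in Case~\ref{case2} of the paper's main theorem. The genuine gap is in your key device. An adapted \emph{piecewise-constant} integrand can never produce $\sum_k (\Delta_k B^H)^2$: a constant value on $[s_{n,k},s_{n,k+1})$ must be $\mathcal F_{s_{n,k}}$-measurable, hence cannot equal the increment over that same interval, and with your one-step lag, $\phi = a_n(B^H(s_{n,k})-B^H(s_{n,k-1}))$ on $[s_{n,k},s_{n,k+1})$, the block integral becomes
\[
a_n\sum_k \bigl(B^H(s_{n,k})-B^H(s_{n,k-1})\bigr)\bigl(B^H(s_{n,k+1})-B^H(s_{n,k})\bigr),
\]
a sum of products of \emph{consecutive} increments. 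This is an indefinite quadratic form: it is not nonnegative, its partial sums can dip below zero (so even divergence along block endpoints would not yield divergence for all $t\to1-$), and it is not controlled by Lemma~\ref{small} or Lemma~\ref{small-ball-fbm}, which apply only to sums of squares. So the Borel--Cantelli step, as written, has nothing to apply to.

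The fix is to abandon piecewise constancy, which you imposed only to make $D^\alpha_{0+}\phi\in L^1[0,t]$ obvious, and to take the \emph{running} increment
\[
\phi(t) = a_n\bigl(B^H(t)-B^H(s_{n,k})\bigr),\quad t\in[s_{n,k},s_{n,k+1}),
\]
which is adapted with no lag at all, and for which the change-of-variable formula of Section~\ref{sec2.4} gives the block integral $\frac{a_n}{2}\sum_k(B^H(s_{n,k+1})-B^H(s_{n,k}))^2$ --- precisely the quantity in \eqref{int_n} that Lemma~\ref{small-ball-fbm} controls; moreover, its partial integrals within a block are nonnegative, which is what upgrades divergence along block endpoints to $\lim_{t\to1-}=+\infty$. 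Integrability is not lost: on $[0,t]$ with $t<1$ this $\phi$ consists of finitely many pieces, each H\"older of order $H-\varepsilon>\alpha$, with finitely many jumps, so $D^\alpha_{0+}\phi\in L^1[0,t]$ still holds. Two smaller points: your ``contributes $1$ or $0$'' prescription anticipates the future of the block unless implemented with a stopping time (run the running-increment integrand until the accumulated integral reaches $1$, then switch off, as the paper does with $\sigma_n$); and for $H\in(3/4,1)$ you do not need $N_n$ to grow super-polynomially --- $r(N_n)=N_n^{4-4H}\gtrsim\log n$ suffices, i.e.\ $N_n$ poly-logarithmic in $n$, though over-choosing $N_n$ is harmless.
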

We can now proceed with the main result.
\begin{thm}
Let $\xi$ satisfy Assumption~\ref{assump1}. Then there exists an
$\mathbb
F$-adapted process $\psi=\{\psi(t)\}_{t\in[0,1]}$ such that \eqref
{repr} holds with the integral defined in the extended fractional sense.
\end{thm}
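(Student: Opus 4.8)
The plan is to reduce the claim to a convergence statement and then build $\psi$ by a successive-correction scheme on a partition of $[0,1)$. First I would use the additivity of the extended integral together with the estimate $\bigl|\int_0^1 f\,\mathrm dB^H\bigr|\le C(\omega)\lVert D^\alpha_{0+}f\rVert_{L^1([0,1],\rho)}$ from Section~\ref{sec2.4}: once $\psi$ is shown to be an admissible integrand, it suffices to construct it so that the truncated integrals $I(t):=\int_0^t\psi(s)\,\mathrm dB^H(s)$ converge, as $t\to 1-$, to $\xi$; additivity then identifies $\int_0^1\psi\,\mathrm dB^H$ with this limit. Thus the whole problem is to design an adapted, admissible $\psi$ whose partial integrals track the process $Z$ of Assumption~\ref{assump1} up to the terminal time.

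For the construction, fix a sequence $t_n\uparrow 1$ (for instance $t_n=1-2^{-n}$) and aim to arrange $I(t_{n+1})=Z(t_n)$ for every $n$, with $\psi\equiv 0$ on $[0,t_1]$. Since $Z(t_n)$ is $\mathcal F_{t_n}$-measurable and $Z(t_n)\to Z(1)=\xi$ by~\eqref{log_hold}, this will force $I(t)\to\xi$. On the interval $[t_n,t_{n+1})$ I would transport the increment $\delta_n:=Z(t_n)-I(t_n)$, which is known at time $t_n$, using a divergent integrand $\phi_n$ obtained by repeating the construction behind Lemma~\ref{lemma-inft} on $[t_n,t_{n+1}]$, so that $\phi_n$ is adapted and $\int_{t_n}^t\phi_n\,\mathrm dB^H\to+\infty$ as $t\to t_{n+1}-$. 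Setting $\varepsilon_n=\operatorname{sign}(\delta_n)$ and $\psi=\varepsilon_n\phi_n$ on $[t_n,\sigma_n)$, $\psi=0$ on $[\sigma_n,t_{n+1})$, where $\sigma_n=\inf\bigl\{t\ge t_n:\varepsilon_n\int_{t_n}^t\phi_n\,\mathrm dB^H\ge|\delta_n|\bigr\}$, the divergence guarantees $\sigma_n<t_{n+1}$ a.s., and continuity of $t\mapsto\int_{t_n}^t\phi_n\,\mathrm dB^H$ yields $I(\sigma_n)=Z(t_n)$ exactly, after which $I$ is frozen until $t_{n+1}$. The resulting $\psi$ is adapted because $\phi_n$ is adapted, $\varepsilon_n$ and $\delta_n$ are $\mathcal F_{t_n}$-measurable, and $\sigma_n$ is a stopping time.

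It remains to prove two things, and here lie the real difficulties. First, one must show $I(t)\to\xi$ genuinely, not merely along $(t_n)$: between $t_n$ and $\sigma_n$ the partial integral $\int_{t_n}^t\phi_n\,\mathrm dB^H$ oscillates before first hitting $|\delta_n|$, so I would bound $\sup_{t\in[t_n,\sigma_n]}\bigl|\int_{t_n}^t\phi_n\,\mathrm dB^H\bigr|$ and show these suprema are eventually small, combining the uniform modulus of Theorem~\ref{unif_modulus} with the small-deviation estimate of Lemma~\ref{small-ball-fbm} (which forces the accumulated energy of the fBm increments to be large, hence $\sigma_n$ close to $t_n$) and a Borel--Cantelli argument. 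Second, and this is the main obstacle, one must verify that $\psi$ is admissible, i.e. $D^\alpha_{0+}\psi\in L^1([0,1],\rho)$ with $\rho(x)=(1-x)^{H+\alpha-1}|\log(1-x)|^\mu$. The contribution of the $n$th block to this weighted norm is governed by the size $|\delta_n|\le C(\omega)|\log(1-t_{n-1})|^{-a}$ of the transported increment and by how long $\phi_n$ must act; the small-deviation bound keeps $\sigma_n-t_n$ controlled, and the assumption $a>1$ in~\eqref{log_hold} is exactly what makes the resulting series of block contributions converge. Establishing this summability, and thereby the finiteness of $\lVert D^\alpha_{0+}\psi\rVert_{L^1([0,1],\rho)}$, is the crux of the argument; once it is in place, the reduction of the first paragraph completes the proof.
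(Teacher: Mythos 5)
Your construction has a genuine gap, and it sits exactly where you yourself locate ``the crux'': the integrability of $\psi$. You propose to use the divergent integrand of Lemma~\ref{lemma-inft} on \emph{every} block $[t_n,t_{n+1})$, stopped when its integral first reaches the small level $|\delta_n|$. But Lemma~\ref{lemma-inft} is purely qualitative: it asserts existence of an adapted $\phi$ with a divergent integral and gives no quantitative control whatsoever on $\lVert D^\alpha_{t_n+}(\phi_n\mathbb{I}_{[t_n,\sigma_n]})\rVert_{L^1([t_n,t_{n+1}],\rho)}$ in terms of the stopping level. A small first-passage level does not translate into a small accumulated fractional-derivative norm: $\phi_n$ is not scaled by $|\delta_n|$, its integral oscillates before hitting the level, and $D^\alpha$ is nonlocal (the derivative of the stopped integrand is nonzero even on $(\sigma_n,t_{n+1}]$). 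This is precisely why the paper uses the divergent integrand only as a correction device, in Case~\ref{case1}, which it then proves occurs only finitely often; for all large $n$ it instead uses the \emph{explicitly scaled} integrand $\overline\phi(t)=a_n\sum_k(B^H(t)-B^H(s_{n,k}))\mathbb{I}_{[s_{n,k},s_{n,k+1})}(t)$ with $a_n=2^{-n+3}\delta_n^{-2H}n^{-1}$, whose integral is a sum of squared increments \eqref{int_n}. That explicit form is what makes both halves of the argument work: Lemma~\ref{small-ball-fbm} plus Borel--Cantelli shows the target $|\xi_n-\xi_{n-1}|\le 2^{-n+2}$ is reached before $t_{n+1}$ for all large $n$ (the paper accepts possible failure in a given block, capping $\sigma_n$ at $t_{n+1}$, rather than forcing success with an uncontrollable integrand), and the factor $2^{-n}$ in $a_n$ makes the block-by-block weighted norms computable and summable. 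With Lemma~\ref{lemma-inft} used as a black box on every block, the summability you need cannot be established.

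There is a second, quantitative mismatch: your clock $t_n=1-2^{-n}$ is not adapted to the log-H\"older rate. Under \eqref{log_hold} it gives only $|\delta_n|\le C(\omega)(n\log 2)^{-a}$, i.e.\ polynomial decay, while each block's weighted norm carries factors of order $n^{\mu}|\log\delta_n|^{1/2}$ with $\mu>1/2$; the resulting series converges only for $a$ strictly larger than what the theorem assumes (roughly $a>3/2$), so the case $a$ close to $1$ is lost. The paper's choice $t_n=1-e^{-\kappa^{n/a}}$ with $\kappa\in(2,2^a)$ is the point where Assumption~\ref{assump1} is really used: it converts \eqref{log_hold} into the geometric bound \eqref{xi-incr}, $|\xi_n-\xi_{n-1}|\le 2^{-n+2}$, while keeping $|\log\Delta_n|^{\mu+1/2}\approx\kappa^{(\mu+1/2)n/a}$ dominated by $2^{n}$, which is possible exactly because $a\log 2/\log\kappa>1$. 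Finally, a smaller remark: your first ``difficulty'' (convergence of $I(t)$ to $\xi$ along all $t\to1-$, with control of oscillations inside blocks) is not actually needed. The paper never establishes continuity of the integral in its upper limit; it only shows $V(t_n)=\xi_{n-1}$ eventually, proves $\lVert D^\alpha_{t_N+}\psi\rVert_{L^1([t_N,1],\rho)}\to0$, and concludes via the basic estimate and additivity along the sequence $t_N$ alone.
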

\begin{proof}
The proof is divided into three parts.

\textbf{Construction of }$\mathbf\psi$. Let $\kappa\in(2,2^a)$. Put
$t_n = 1-e^{-\kappa^{n/a}},\ n\ge1$, and let $\Delta_n = t_{n+1}-t_n$.
It is easy to see that
\begin{gather}
(1-t_n)\le C\Delta_n. \label{t_n-ineq}
\end{gather}
Denote for brevity $\xi_n = Z(t_n)$. Then, by Assumption~1, $\vert\xi
_n -\xi\vert\le C(\omega) \kappa^{-n}$, so that $|\xi_n - \xi|\le
2^{-n}$ for
all $n$ large enough, say, for $n\ge N(\omega)$. In particular, we have
%
%
\begin{equation}
\label{xi-incr} |\xi_n - \xi_{n-1}| \le2^{-n+2}
\end{equation}
for all $n\ge N(\omega)+1$.

The integrand $\psi$ is constructed in an inductive way between the
points $\{t_n,n\ge1\}$. Set first $\psi(t)=0$, $t\in[0,t_1]$. Assuming
that $\psi(t)$ is defined on $[0,t_n)$, let $V(t)=\int_0^t \psi
(s)\mathrm{d}
B^H(s), t\in[0,t_n]$. The construction of the integrand $[t_n,t_{n+1})$
depends on whether $V(t_n)=\xi_{n-1}$ or not.

%
\begin{Case}\label{case1}
$V(t_n)\neq\xi_{n-1}$. In this case, thanks to
Lemma \ref{lemma-inft}, there exists an adapted process $\{\phi
(t),t\in
[t_n,t_{n+1}]\}$ such that $\int_{t_n}^{t}\phi(s)\mathrm{d}B^H(s)\to
+\infty$
as $t\to t_{n+1}-$. Define the stopping time
\begin{gather*}
\tau_n=\inf\Biggl\{t\geq t_n: \int_{t_n}^{t}
\phi(s)\mathrm{d}B^H(s)\geq|\xi_n-V_{t_n}| \Biggr\}
\end{gather*}
and the process
\begin{gather*}
\psi(t)=\phi(t) \mathsf{sign}\big(\xi_n-V(t_n)\big)\mathbb{I}_{[t_n,\tau_n]}(t), \quad t\in[t_n,t_{n+1}).
\end{gather*}
It is obvious that $\int_{t_n}^{t_{n+1}}\psi(s)\mathrm{d}B^H(s)=\xi
_n-V(t_n)$
and $V(t_{n+1})=\xi_n$.
\end{Case}

%
\begin{Case}\label{case2}
$V(t_n)=\xi_{n-1}$. We consider the uniform
partition $s_{n,k} = t_n + k\delta_n$, $k=1,\ldots,n$ of
$[t_n,t_{n+1}]$ with mesh $\delta_n=\Delta_n n^{-1}$ and auxiliary function
\begin{gather*}
\overline{\phi}(t)=a_n\sum_{k=0}^{n-1}
\bigl( B^H(t)-B^H(s_{n,k})\bigr)\mathbb{I}
_{[s_{n,k},s_{n,k+1})}(t),
\end{gather*}
with $a_n=2^{-n+3}\delta_n^{-2H}n^{-1}$. Notice that by the
change-of-variable formula,
%
%
\begin{equation}
\label{int_n} \int_{t_n}^{t_{n+1}} \overline{\phi}(t) \mathrm{d}
B^H(t) = a_n \sum_{k=0}^{n-1}
\bigl(B^H(s_{n,k+1})-B^H(s_{n,k})
\bigr)^2.
\end{equation}

Define the stopping time
\begin{gather*}
\sigma_n=\inf\Biggl\{t\geq t_n: \int
_{t_n}^t \overline{\phi}(s)\mathrm{d}B^H(s)
\geq|\xi_n-\xi_{n-1}| \Biggr\}\wedge t_{n+1}
\end{gather*}
and set
\begin{gather*}
\psi(t)=\mathsf{sign}(\xi_n-\xi_{n-1})\overline{\phi}(t) \mathbb{I}_{[t_n,\sigma
_n]}(t), \quad t\in[t_n,t_{n+1}). 
\end{gather*}
\end{Case}

\textbf{The construction approaches} $\mathbf{\xi}$. Our aim now is to
prove that $V(t_{n}) = \xi_{n-1}$ for all $n$ large enough. By
construction it suffices to show that Case~\ref{case2} happens for all $n$ large
enough. Equivalently, we need to show that $\sigma_n <t_{n+1}$ for all
$n$ large enough. In view of \eqref{int_n}, the latter inequality
holds if
\[
a_n \sum_{k=0}^{n-1}
\bigl(B^H(s_{n,k+1})-B^H(s_{n,k})
\bigr)^2> \vert\xi_n - \xi_{n-1}\vert.
\]
Thus, in view of the Borell--Cantelli lemma and inequality \eqref
{xi-incr}, it suffices to verify the convergence of the series
\begin{gather*}
\sum_{n\geq1} \mathsf P\left\{a_n\sum _{k=1}^{n-1} \bigl
(B^H(s_{n,k+1})-B^H(s_{n,k}) \bigr)^2<2^{-n+2} \right\}
\end{gather*}
or, equivalently, that
\begin{gather*}
\sum_{n\geq1} \mathsf P\left\{\sum_{k=1}^{n-1} \bigl(\delta
_n^{-H}\bigl(B^H_{s_{n,k+1}}-B^H_{s_{n,k}} \bigr) \bigr)^2<n/2
\right\}<\infty.
\end{gather*}
The latter follows from Lemma \ref{small-ball-fbm} through the
self-similarity and stationarity of increments of an fBm. Thus, for all
$n$ large enough, say, for $n\ge N_2(\omega)$, $V(t_n) = \xi_{n-1}$.

\textbf{Integrability of} $\mathbf\psi$. It is easy to see that
$\psi$
is integrable w.r.t.\ $B^H$ on any interval $[0,t_N]$. It remains to
verify that the integral $\int_{t_N}^1 \psi(s) \mathrm{d}B^H(s)$ is well
defined and vanishes as $N\to\infty$ (note that we did not establish
the continuity of the integral as a function of the upper limit). For
some $\mu>1/2$ (which will be specified later), define
\[
\rho(x) = (1-x)^{H+\alpha-1} \big|\log(1-x)\big|^{\mu}.
\]
Clearly, it suffices to show that $\lVert D^{\alpha}_{t_N+}\psi
\rVert_{L^1([t_N,1],\rho)}\to0$, $N\to\infty$.
Let $N\ge N_2(\omega)$. Write
%
%
\begin{align*}
\int_{t_N}^1\big|\bigl(D^{\alpha}_{t_N+}\psi\bigr) (s)\big|\rho(s)\mathrm
{d}s &=
\sum_{n=N}^\infty\int_{t_n}^{t_{n+1}}\big|\bigl(D^{\alpha}_{t_N+}\psi
\bigr) (s)\big|\rho(s)\mathrm{d}s\\
&\le C\sum_{n=N}^\infty\Delta_n^{H+\alpha-1}|\log\Delta_n|^{\mu
}\int_{t_n}^{t_{n+1}}\big|\bigl(D_{t_N+}^{\alpha}\psi\bigr)
(s)\big|\mathrm{d}s,
\end{align*}
%
%
where we have used \eqref{t_n-ineq} and the fact that $\rho$ is
decreasing in a left neighborhood of $1$.

Now we estimate
\begin{align*}
&\int_{t_n}^{t_{n+1}}\big|\bigl(D_{t_N+}^{\alpha}\psi\bigr)
(s)\big|\mathrm{d}s
\leq\int_{t_n}^{t_{n+1}} \Biggl(\frac{|\psi(s)|}{(s-t_N)^{\alpha
}}+\int_{t_N}^s \frac{|\psi(s)-\psi(u)|}{|s-u|^{1+\alpha}}\mathrm{d}u
\Biggr) \mathrm{d}s\\
&\quad\le C(\omega)a_n \Delta_n^{1-\alpha}\delta^{H}_n\big|\log
(\delta_n)\big|^{1/2}+\int_{t_N}^{t_{n+1}}\int_{t_n}^s\frac{|\psi
(s)-\psi(u)|}{|s-u|^{1+\alpha}}\mathrm{d}u\, \mathrm{d}s,
\end{align*}
where we have used Theorem~\ref{unif_modulus} to estimate $\psi$.

Consider the second term. It equals
\begin{align*}
\sum_{k=1}^n \int_{s_{n,k-1}}^{s_{n,k}}
\Biggl(\int_{t_N}^{t_n}+\int_{t_n}^{s_{n,k-1}}+
\int_{s_{n,k-1}}^{s} \Biggr)\frac{|\psi(s)-\psi
(u)|}{|s-u|^{1+\alpha}}\mathrm{d}u\, \mathrm{d}
s=:I_1+I_2+I_3.
\end{align*}
Start with $I_1$, observing that $\psi$ vanishes on $(\sigma_n,t_{n+1}]$:
\begin{align*}
I_1&\leq\int_{t_n}^{t_{n+1}}\sum_{j=N}^n \int_{t_{j-1}}^{t_j}\frac
{|\psi(s)|+|\psi(u)|}{|s-u|^{1+\alpha}}\mathrm{d}u\, \mathrm{d}s\\
&\leq C(\omega) a_n \delta_n^{H}|\log\delta_n|^{1/2}\int
_{t_n}^{t_{n+1}}(s-t_n)^{-\alpha}\mathrm{d}s\\
&\quad+\sum_{j=N}^{n-1} a_j \delta^{H}_j|\log\delta_j|^{1/2}\int
_{t_n}^{t_{n+1}}(s-t_{j+1})^{-\alpha}\mathrm{d}s\\
&\leq C(\omega) \Biggl(a_n \Delta^{1-\alpha}_n\delta^H_n|\log
\delta_n|^{1/2}+\sum_{j=N}^{n-1}a_j\delta^H_j |\log\delta
_j|^{1/2}\Delta_n^{1-\alpha} \Biggr).
\end{align*}
Proceed with the second term:
\begin{align*}
I_2&\leq C(\omega)a_n\delta^H_n|\log\delta_n|^{1/2} \sum
_{k=1}^n\int_{s_{n,k-1}}^{s_{n,k}}\int
_{t_n}^{s_{n,k-1}}|s-u|^{-1-\alpha}\mathrm{d}u\, \mathrm{d}s\\
&\le C(\omega)a_n\delta^H_n|\log\delta_n|^{1/2}\sum_{k=1}^n\int
_{s_{n,k-1}}^{s_{n,k}} (s-s_{n,k-1})^{-\alpha}\mathrm{d}s\\
&\le C(\omega)a_n n \delta_n^{H+1-\alpha}|\log\delta
_n|^{1/2}=C(\omega)a_n\Delta_n\delta_n^{H-\alpha}|\log\delta_n|^{1/2}.
\end{align*}
Finally, assuming that $\sigma_n\in[s_{n,l-1},s_{n,l})$, we have
\begin{align*}
I_3&\leq C(\omega)\sum_{k=1}^{l-1}\int_{s_{n,k-1}}^{s_{n,k}}\int_{s_{n,k-1}}^s a_n\frac{(s-u)^{H}|\log(s-u)|^{1/2}}{(s-u)^{1+\alpha}}\mathrm{d}u\, \mathrm{d}s\\
&\quad+ \int_{s_{n,l-1}}^{\sigma_n}\int_{s_{n,l-1}}^s\frac{|\psi(s)-\psi(u)|}{|s-u|^{1+\alpha}}\mathrm{d}u\, \mathrm{d}s+\int_{\sigma_n}^{s_{n,l}}\int_{s_{n,l-1}}^{\sigma_n}\frac{|\psi(s)-\psi(u)|}{|s-u|^{1+\alpha}}\mathrm{d}u\, \mathrm{d}s\\
&\le C(\omega)a_n\sum_{k=1}^n\int_{s_{n,k-1}}^{s_{n,k}}(s-s_{n,k-1})^{H-\alpha}\big|\log(s-s_{n,k-1})\big|^{1/2}\mathrm{d}s\\
&\quad+C(\omega) a_n \delta_n^H |\log\delta_n|^{1/2}\int_{\sigma_n}^{s_{n,l}}\int_{s_{n,l-1}}^{\sigma_n}\frac{1}{|s-u|^{1+\alpha}}\mathrm{d}u\, \mathrm{d}s\\
&\leq C(\omega)a_n n\delta_n^{H+1-\alpha}|\log\delta_n|^{1/2}n\delta_n^{1-\alpha}= C(\omega)a_n\Delta_n \delta^{H-\alpha}_n|\log\delta_n|^{1/2}.
\end{align*}

Gathering all estimates, we get
\begin{align*}
&\int_{t_N}^1 \big|D^\alpha_{t_N+}(\psi) (s)\big|\rho(s)\mathrm{d}s \leq
C(\omega
)\sum_{n=N}^\infty\Biggl(a_n\Delta^{H}_n\delta^{H}_n|\log\delta
_n|^{1/2}|\log\Delta_n|^{\mu}\\
&\quad+a_n\Delta^{H+\alpha}_n\delta^{H-\alpha}_n|\log\delta
_n|^{1/2}|\log\Delta_n|^{\mu}+\Delta_n^H|\log\Delta_n|^{\mu}\sum
_{j=N}^{n-1}a_j\delta_j^H|\log\delta_j|^{1/2}\Biggr).
\end{align*}
Consider the second sum. After changing the order of summation, we get
\begin{align*}
&\sum_{n=N}^\infty\sum_{j=N}^{n-1}a_j\delta_j^H|\log\delta
_j|^{1/2}\Delta^H_n|\log\Delta_n|^\mu= \sum_{j=N}^\infty a_j\delta
_j^H\sum_{n = j+1}^\infty\Delta_n^H |\log\Delta_n|^\mu\\[-1pt]
&\quad\le C\sum_{j=N}^\infty a_j\delta_j^H \Delta_j^H |\log\Delta
_j|^\mu\sum_{n = j+1}^\infty e^{\kappa^{j/a} - \kappa^{n/a}}\kappa
^{\mu(n-j)/a}\\[-1pt]
&\quad\le C\sum_{j=N}^\infty a_j\delta_j^H \Delta_j^H|\log\Delta
_j|^\mu\\[-1pt]
&\quad\le C\sum_{j=N}^\infty a_j\delta_j^H \Delta_j^H |\log\Delta
_j|^\mu\sum_{n = j+1}^\infty e^{- \kappa^{(n-j)n/a}}\kappa^{\mu
(n-j)/a}\\[-1pt]
&\quad\le C \sum_{j=N}^\infty a_j\delta_j^H \Delta_j^H |\log\Delta
_j|^\mu.
\end{align*}
Consequently, noting that $\Delta_n^H\delta_n^H\leq\Delta
_n^{H+\alpha
}\delta_n^{H-\alpha}$, we have
\begin{align*}
&\int_{t_N}^1 \big|D^\alpha_{t_N+}(\psi) (s)\big|\rho(s)\mathrm{d}s \leq
C(\omega
)\sum_{n=N}^\infty a_n\Delta^{H+\alpha}_n\delta^{H-\alpha}_n|\log
\delta_n|^{1/2}|\log\Delta_n|^{\mu}\\[-1pt]
&\quad\le\sum_{n=N}^\infty2^{-n}n^{-1}\Delta_n^{H+\alpha}\delta
_n^{-H-\alpha}|\log\delta_n|^{\mu+1/2}\\[-1pt]
&\quad\le\sum_{n=N}^\infty2^{-n}n^{H+\alpha-1}\bigl(|\log\Delta
_n|^{\mu+1/2} + n^{\mu+1/2}\bigr)\\[-1pt]
&\quad\le\sum_{n=N}^\infty2^{-n}n^{H+\alpha-1}\bigl(C + \kappa
^{(\mu+1/2)n/a} + n^{\mu+1/2}\bigr).
\end{align*}
Now, in order for the last series to converge, it is sufficient that
$(\mu+1/2)/a < \log2/\log\kappa$ or, equivalently, $\mu< a \log
2/\log\kappa-1/2$. Since $a \log2/\log\kappa> 1$ by our choice, it
is possible to take some $\mu>1/2$ satisfying this requirement, thus
finishing the proof.
\end{proof}

\appendix
\section{Appendix. Properties of the extended fractional integral}

In this section, we establish important properties of the extended
fractional integral defined by \eqref{ls-int}. We will restrict
ourselves to the case $[a,b] = [0,1]$. Assume that the function
$g\colon
[0,1]\to\mathbb R$ satisfies the following regularity requirement:
there exist
$\lambda\in(0,1)$ and $\nu>0$ such that
%
%
\begin{equation}
\label{g-cont} \big\vert g(x)-g(y)\big\vert\leq C \vert x-y\vert^{\lambda}
\big|\log|x-y|
\big|^{\nu}
\end{equation}
for all $x,y\in[0,1]$.
We first state some properties of $g$ that follow from this
regularity.\vadjust{\eject}
\begin{lemma}
\label{lemma1}
Let $g\colon[0,1]\to\mathbb R$ satisfy \eqref{g-cont}. Then for any
$\beta
\in(0,\lambda)$,
\[
\big\vert\bigl(D^{\beta}_{1-}g_{1-}\bigr) (x)- \bigl(D^{\beta
}_{1-}g_{1-}\bigr) (y)\big\vert \leq
C_\beta|x-y|^{\lambda-\beta} \big|\log\vert x-y\vert \big|^{\nu}
\]
for all $x,y\in[0,1]$.
\end{lemma}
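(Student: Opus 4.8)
The plan is to estimate the difference $(D^{\beta}_{1-}g_{1-})(x)-(D^{\beta}_{1-}g_{1-})(y)$ directly from the Weyl-form definition of the right-sided fractional derivative, exploiting the log-H\"older regularity \eqref{g-cont}. Recall that, up to the constant $e^{-i\pi\beta}/\varGamma(1-\beta)$,
\[
\bigl(D^{\beta}_{1-}g_{1-}\bigr)(x)=\frac{g(x)-g(1)}{(1-x)^{\beta}}+\beta\int_{x}^{1}\frac{g(x)-g(u)}{(u-x)^{1+\beta}}\mathrm{d}u.
\]
First I would assume without loss of generality that $x<y$ and split the whole expression into the boundary term $T_1(x)-T_1(y)$, where $T_1(x)=\bigl(g(x)-g(1)\bigr)(1-x)^{-\beta}$, and the integral term $T_2(x)-T_2(y)$. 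For the boundary term the bound is immediate: writing $g(x)-g(1)=\bigl(g(x)-g(y)\bigr)+\bigl(g(y)-g(1)\bigr)$ and using \eqref{g-cont} together with the elementary estimate for the increment of $(1-\cdot)^{-\beta}$ yields a contribution of the desired order $|x-y|^{\lambda-\beta}\bigl|\log|x-y|\bigr|^{\nu}$.

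The substantial work is in the integral term. I would follow the standard device of rewriting $T_2(x)$ after the change of variables $u=x+r$, so that
\[
T_2(x)=\beta\int_{0}^{1-x}\frac{g(x)-g(x+r)}{r^{1+\beta}}\mathrm{d}r,
\]
and similarly for $T_2(y)$, and then subtract the two integrals over the common range $r\in(0,1-y)$, leaving an extra tail $\int_{1-y}^{1-x}$ from $T_2(x)$. On the common range the integrand difference is
\[
\frac{\bigl(g(x)-g(x+r)\bigr)-\bigl(g(y)-g(y+r)\bigr)}{r^{1+\beta}},
\]
and the key is to bound the numerator in two complementary ways: for small $r$ (say $r\le|x-y|$) I would use the triangle inequality and \eqref{g-cont} applied to each of the four differences, gaining a factor $r^{\lambda}\bigl|\log r\bigr|^{\nu}$; for large $r$ (say $r\ge|x-y|$) I would instead regard the numerator as a second difference of $g$ in the base point and bound it by the increment in the base point, namely by $C|x-y|^{\lambda}\bigl|\log|x-y|\bigr|^{\nu}$ using \eqref{g-cont} on $|x-y|$. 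Integrating $r^{\lambda-1-\beta}\bigl|\log r\bigr|^{\nu}$ over $r\le|x-y|$ produces $|x-y|^{\lambda-\beta}\bigl|\log|x-y|\bigr|^{\nu}$ since $\lambda-\beta>0$, while integrating $r^{-1-\beta}$ over $r\ge|x-y|$ against the constant numerator bound produces the same order. The residual tail integral $\int_{1-y}^{1-x}$ is controlled likewise using that its length is at most $|x-y|$.

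The main obstacle I anticipate is the sharp bookkeeping of the logarithmic factors across the split: one must check that the crossover point $r\sim|x-y|$ is chosen so that in both regimes the emerging power of $\bigl|\log|x-y|\bigr|$ is exactly $\nu$ (not $\nu+1$), which requires being slightly careful that $\int_{0}^{|x-y|}r^{\lambda-1-\beta}|\log r|^{\nu}\mathrm{d}r$ is genuinely comparable to $|x-y|^{\lambda-\beta}\bigl|\log|x-y|\bigr|^{\nu}$ rather than picking up an extra logarithm; this follows because the integral is dominated by the endpoint $r=|x-y|$ when $\lambda-\beta>0$, but it is the one place where the estimate could degrade if handled carelessly. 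Collecting the boundary term, the two-regime estimate of the common integral, and the tail, and absorbing all constants into $C_\beta$, gives the claimed inequality uniformly in $x,y\in[0,1]$.
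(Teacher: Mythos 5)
Your proposal is correct and is essentially the proof the paper intends: the paper omits the argument entirely, saying only that it is similar to that of Lemma~13.1 in \cite{samko}, and that classical proof is exactly your scheme --- split the Weyl derivative into the boundary term and the Marchaud-type integral, split the integral at $r\sim|x-y|$, use the modulus of continuity in $r$ for small $r$ and in $|x-y|$ for large $r$, and check that the endpoint-dominated integral $\int_0^{|x-y|}r^{\lambda-\beta-1}\bigl|\log r\bigr|^{\nu}\,\mathrm{d}r$ produces no extra logarithm. One detail to tighten: for the tail $\int_{1-y}^{1-x}r^{-1-\beta}\bigl|g(x)-g(x+r)\bigr|\,\mathrm{d}r$, the ``length at most $|x-y|$'' bound by itself fails when $1-y\ll|x-y|$ (the integrand peaks at $r=1-y$, where it may be much larger than at $r=|x-y|$), but your own endpoint estimate applied to $\int_0^{2|x-y|}r^{\lambda-\beta-1}\bigl|\log r\bigr|^{\nu}\,\mathrm{d}r$ closes this sub-case.
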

The proof is similar to that of Lemma~13.1 in \cite{samko} and thus omitted.

Now, for some $\mu>\nu$, define
%
%
\begin{equation}
\label{rho} \rho(x) = |1-x|^{\lambda-\beta}\big|\log(1-x)\big|^{\mu}, \quad x \in[0,1].
\end{equation}
\begin{lemma}
\label{lemma2}
Let $g\colon[0,1]\to\mathbb R$ satisfy \eqref{g-cont}, and $\psi\in
C^\infty
[-1,0]\to\mathbb R$ be a positive function with $\int_{-1}^0 \psi
(x)\mathrm{d}x=1$.
Define $\psi_N(x)=N\psi(Nx)$,
\[
g_N(x)=(g_{1-}\ast\psi_N) (x)= \int
_{x}^{(x+1/N)\wedge1}g_{1-}(y)\psi
_N(x-y)\mathrm{d} y,\quad N\ge1.
\]
Then for any $\mu>\nu$, $\beta\in(0,\lambda)$, $x\in[0,1]$, and all
$N\ge1$ large enough, we have
\begin{gather}
\big\vert\bigl(D^{\beta}_{1-}g_N\bigr) (x)- \bigl(D^{\beta
}_{1-}g_{1-}\bigr) (x)\big\vert\leq
C_\beta(\log N)^{\nu-\mu}\rho(x).
\end{gather}
\end{lemma}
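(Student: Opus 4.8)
The plan is to exploit linearity of $D^{\beta}_{1-}$, writing $h_N:=g_N-g_{1-}$ so that $D^{\beta}_{1-}g_N-D^{\beta}_{1-}g_{1-}=D^{\beta}_{1-}h_N$, and then to estimate $D^{\beta}_{1-}h_N$ directly from its Marchaud (Weyl) form. Specializing the definition of $D^{1-\alpha}_{b-}$ with $b=1$ and $\alpha=1-\beta$, one has, up to the unimodular prefactor $e^{-i\pi(1-\beta)}/\varGamma(1-\beta)$,
\[
\bigl(D^{\beta}_{1-}h_N\bigr)(x)=\frac{h_N(x)}{(1-x)^{\beta}}+\beta\int_x^1\frac{h_N(x)-h_N(u)}{(u-x)^{1+\beta}}\,\mathrm{d}u .
\]
I would bound the modulus of each of the two terms separately by $C_\beta(\log N)^{\nu-\mu}\rho(x)$.

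The first ingredient is a pair of pointwise bounds on $h_N$. Writing the mollification as $g_N(x)=\int_{-1}^0 g_{1-}(x-w/N)\psi(w)\,\mathrm{d}w$, I treat two regimes according to $1/N$ versus the distance $1-x$. When $1-x\ge 1/N$ the averaging window $[x,x+1/N]$ lies inside $(0,1)$, and \eqref{g-cont} gives the \emph{interior} bound $|h_N(x)|\le CN^{-\lambda}(\log N)^{\nu}$; when $1-x<1/N$ the window protrudes past $1$, where $g_{1-}$ vanishes, and since $g_{1-}$ is of size $(1-x)^{\lambda}|\log(1-x)|^{\nu}$ near the endpoint while mollification does not increase the supremum, one gets the \emph{endpoint} bound $|h_N(x)|\le C(1-x)^{\lambda}|\log(1-x)|^{\nu}$. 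The crucial elementary observation is that $t\mapsto t^{\lambda}|\log t|^{\mu}$ is increasing for small $t$: in the interior regime this yields $N^{-\lambda}(\log N)^{\mu}\le C(1-x)^{\lambda}|\log(1-x)|^{\mu}$, and in the endpoint regime $|\log(1-x)|>\log N$ together with $\nu-\mu<0$ yields $|\log(1-x)|^{\nu-\mu}\le(\log N)^{\nu-\mu}$. Either way $|h_N(x)|\le C(\log N)^{\nu-\mu}(1-x)^{\lambda}|\log(1-x)|^{\mu}$, which after dividing by $(1-x)^{\beta}$ is precisely $C(\log N)^{\nu-\mu}\rho(x)$, settling the first term.

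For the increment integral I would add a smoothness bound valid for small $u-x$. Differentiating the mollification and using $\int\psi'_N=0$ in the interior (resp.\ a crude bound near $1$) gives $|g'_N|\le CN^{1-\lambda}(\log N)^{\nu}$ away from the endpoint and $|g'_N(x)|\le CN(1-x)^{\lambda}|\log(1-x)|^{\nu}$ near it; combined with \eqref{g-cont}, this controls $|h_N(x)-h_N(u)|$ whenever $u-x\le 1/N$. I then split $\int_x^1$ at $u-x=1/N$: on the near part I integrate the smoothness bound against $(u-x)^{-1-\beta}$ (integrable since $\lambda-\beta>0$), obtaining $CN^{-(\lambda-\beta)}(\log N)^{\nu}$, and on the far part I use $|h_N(x)-h_N(u)|\le|h_N(x)|+|h_N(u)|$ with the two pointwise bounds from the previous step. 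In the endpoint regime $1-x<1/N$ the whole integration window is shorter than $1/N$, so only the smoothness bound is needed, and $N(1-x)<1$ collapses the extra factor. Each resulting term is reduced to $C(\log N)^{\nu-\mu}\rho(x)$ by the same monotonicity comparison of scales used above.

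The main obstacle is exactly this bookkeeping at $x=1$: tracking the truncation of the averaging window at the endpoint and, above all, converting the uniform mollification error (which naturally carries powers of $\log N$) into the weight $\rho(x)$ (which carries powers of $|\log(1-x)|$). The whole estimate hinges on the monotonicity of $t^{\lambda}|\log t|^{\mu}$ and on the sign $\nu-\mu<0$, which is what allows a surplus power of the logarithm to be traded for the decaying factor $(\log N)^{\nu-\mu}$; making the exponents line up in every sub-region --- interior versus endpoint for both $x$ and $u$, and near versus far in $u-x$ --- is where the care lies, even though each individual computation is routine.
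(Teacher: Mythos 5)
Your proof is correct, but it takes a genuinely different route from the paper's. The paper never estimates the Marchaud form of the difference: it starts from the commutation identity $D^{\beta}_{1-}g_N=\mathbf{1}_{[0,1]}\bigl(D^{\beta}_{1-}g_{1-}\bigr)\ast\psi_N$, so the quantity to be bounded becomes the mollification error of the single function $h=D^{\beta}_{1-}g_{1-}$, and then invokes Lemma~\ref{lemma1} (log-H\"older continuity of order $\lambda-\beta$ with log exponent $\nu$ of the fractional derivative itself, a Samko-type result) to estimate $\int|h(x-y/N)-h(x)|\psi(y)\,\mathrm{d}y$, splitting into the regimes $x\le 1-1/N$ and $x>1-1/N$. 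You instead work at the level of $g$: by linearity you bound $D^{\beta}_{1-}(g_N-g_{1-})$ directly from its Weyl/Marchaud representation, which obliges you to supply sup-norm bounds on $h_N=g_N-g_{1-}$, Lipschitz bounds on $g_N$, and a near/far splitting of the singular integral at $u-x=1/N$ --- in effect you re-derive inline, for the difference $h_N$, the content that the paper outsources to Lemma~\ref{lemma1}. Both arguments funnel into the same two regimes in $1-x$ versus $1/N$ and the same scale comparisons (monotonicity of $t^{\sigma}|\log t|^{\mu}$ for small $t$, and trading the surplus logarithm using $\nu-\mu<0$), and your individual estimates (interior bound $N^{-\lambda}(\log N)^{\nu}$, endpoint bound $(1-x)^{\lambda}|\log(1-x)|^{\nu}$, derivative bounds $N^{1-\lambda}(\log N)^{\nu}$ resp.\ $N(1-x)^{\lambda}|\log(1-x)|^{\nu}$, and the collapse via $N(1-x)<1$ near the endpoint) all check out. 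The trade-off: the paper's proof is a few lines once Lemma~\ref{lemma1} and the commutation identity are granted (and Lemma~\ref{lemma1} is needed elsewhere anyway), whereas yours is self-contained --- it uses only \eqref{g-cont} and avoids justifying the commutation of the one-sided mollifier with the Weyl derivative, which the paper asserts without proof --- at the price of heavier bookkeeping. One caveat you share with the paper: both proofs compare $N^{\beta-\lambda}(\log N)^{\mu}$ (or $N^{-\lambda}(\log N)^{\mu}$) with $\rho(x)$, which works where $1-x$ is small or where $\rho$ is bounded below; since $\rho(x)\to0$ as $x\to0$, the claimed inequality degenerates near $x=0$ in both arguments (the admissible $N$ must depend on $x$, and at $x=0$ itself the bound generically fails). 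That is a defect of the lemma's formulation, not of your argument.
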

\begin{proof}
Note that $D_{1-}^{\beta}g_N= \mathbf1_{[0,1]}(D_{1-}^{\beta
}g_{1-})\ast\psi_N$. For brevity, set $h(x)=\break(D_{1-}^{\beta
}g_{1-})(x)$, $x\in[0,1]$. First, suppose that $x\in[0,1-1/N]$. Thanks
to Lemma~\ref{lemma1},
\begin{align*}
\big\vert\bigl(D^{\beta}_{1-}g_N\bigr) (x)-\bigl(D^{\beta
}_{1-}g_{1-}\bigr) (x)\big\vert &\leq C \int_{x}^{x+1/N}\big|h(x)-h(y)\big|N\psi
\bigl(N(x-y)\bigr)\mathrm{d}y\\
&\le C \int_{-1}^0\big|h(x-y/N)-h(y)\big|\psi(y)\mathrm{d}y\\
&\le C_\beta N^{\beta-\lambda}\int_{-1}^0|y|^{\lambda-\beta}\big(N -
\log|y|\big)^\nu\psi(y)\mathrm{d}y\\
&\leq C N ^{\beta-\lambda}(\log N)^\nu\le C (\log N)^{\nu-\mu}\rho(x),
\end{align*}
where the last inequality holds for all $N$ large enough since $\rho
(x)$ is decreasing for $x$ close to $1$.

Now consider the case $x\in[1-1/N,1]$. Using Lemma \eqref{lemma1}, we have
\begin{align*}
&\big\vert\bigl(D^{\beta}_{1-}g_N\bigr) (x)-\bigl(D^{\beta
}_{1-}g_{1-}\bigr) (x)\big\vert\le\big\vert\bigl(D^{\beta}_{1-}g_N\bigr)
(x)\big\vert+\big\vert\bigl(D^{\beta}_{1-}g_{1-}\bigr) (x)\big\vert\\
&\quad\le\int_{x}^1 N\psi\bigl(N(x-y)\bigr)\big|h(1)-h(y)\big|\mathrm{d}
y+\big|h(1)-h(x)\big|\\
&\quad\le C |1-x|^{\lambda-\beta}\big|\log(1-x)\big|^{\nu}\leq C\rho(x)
\big|\log(1-x)\big|^{\nu-\mu}\\
&\quad\le C\rho(x) (\log N)^{\nu-\mu}
\end{align*}
since $x>1-1/N$ and $\nu<\mu$. This finishes the proof.
\end{proof}

\begin{thm}
Let $g\colon[0,1]\to\mathbb R$ satisfy \eqref{g-cont}, $\alpha\in
(1-\lambda
, 1)$, and let $\rho$ be given by \eqref{rho} with $\beta=1-\alpha$.
Assume further that $f\colon[0,1]\to\mathbb R$ is such that
$D^{\alpha}_{0+}
f\in L^1([0,1], \rho)$. Then the extended fractional integral
\[
\int_{0}^{1} f(x)\mathrm{d}g(x)=e^{i\pi\alpha}\int
_0^1\bigl(D^{\alpha
}_{1-}f
\bigr) (x) \bigl(D^{1-\alpha}_{1-}g_{1-}\bigr) (x)\mathrm{d}x
\]
is well defined. Moreover, if $f$ and $g$ satisfy the above assumptions
for different $\alpha$, the value of the integral is preserved.
Additionally, if $D^{\alpha}_{t+} f\in L^1([t,1], \rho)$ for some
$t\in
(0,1)$, then
\[
\int_0^1f(x)\mathrm{d}g(x)=\int_0^t
f(x)\mathrm{d}g(x)+\int_{t}^1 f(x)\mathrm{d}g(x),
\]
where the first integral is understood in the sense of \emph{\cite
{zahle}} $(D^{\alpha}_{0+} f\in L^1[0,t]$,\break$D^{1-\alpha}_{t-}
g_{t-}\in
L^\infty[0,t])$, and the second in the above sense.
\end{thm}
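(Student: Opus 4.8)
The first step is a pointwise estimate on the integrator factor. Expanding $D^{1-\alpha}_{1-}g_{1-}$ in Weyl form, using $g_{1-}(x)=g(x)-g(1)$ and \eqref{g-cont}, the boundary term is bounded by $C(1-x)^{\lambda+\alpha-1}\bigl|\log(1-x)\bigr|^{\nu}$, while the singular integral $\int_x^1(g(x)-g(u))(u-x)^{\alpha-2}\,\mathrm{d}u$ converges absolutely (here the hypothesis $\alpha>1-\lambda$, i.e.\ $\lambda+\alpha-1>0$, is exactly what is needed) and is of the same order. Hence
\[
\bigl\vert\bigl(D^{1-\alpha}_{1-}g_{1-}\bigr)(x)\bigr\vert\le C(1-x)^{\lambda+\alpha-1}\bigl|\log(1-x)\bigr|^{\nu},
\]
which is in particular bounded on $[0,1]$ and, since $\mu>\nu$, is dominated by $\rho(x)$ as $x\to1$ (the only place where $\rho$ degenerates in an essential way). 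Pairing with $D^\alpha_{0+}f$ and using $D^\alpha_{0+}f\in L^1([0,1],\rho)$ yields both the convergence of the defining integral and the a priori bound $\bigl\vert\int_0^1 f\,\mathrm{d}g\bigr\vert\le C\bigl\lVert D^\alpha_{0+}f\bigr\rVert_{L^1([0,1],\rho)}$.

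\textbf{Independence of $\alpha$.} I expect this to be the crux. Note first that, by the estimate above, $D^{1-\alpha}_{1-}g_{1-}$ is actually bounded, so we sit close to the classical hypotheses of \cite{zahle}; the only obstruction is that the admissibility assumption places $D^\alpha_{0+}f$ merely in $L^1(\rho)$ and not in $L^1$, so the composition and integration-by-parts identities for Riemann--Liouville operators (cf.\ \cite{samko}) cannot be invoked verbatim. The plan is to approximate the integrator, using the mollifications $g_N$ from Lemma~\ref{lemma2}. For each fixed smooth $g_N$ the factor $D^{1-\alpha}_{1-}g_N$ is regular, the pairing $e^{i\pi\alpha}\int_0^1(D^\alpha_{0+}f)(D^{1-\alpha}_{1-}g_N)\,\mathrm{d}x$ converges by the uniform $\rho$-bound below, and by the construction in \cite{zahle} its value is the classical integral $\int_0^1 f\,\mathrm{d}g_N$, which is manifestly independent of $\alpha$. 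It then remains to pass to the limit $N\to\infty$ for two fixed exponents $\alpha$ and $\alpha'$ separately. The legitimacy of this limit is precisely what Lemma~\ref{lemma2} is for: the bound $\bigl\vert D^{1-\alpha}_{1-}g_N-D^{1-\alpha}_{1-}g_{1-}\bigr\vert\le C(\log N)^{\nu-\mu}\rho$ holds with a constant uniform in $N$, so together with the first part we get $\bigl\vert D^{1-\alpha}_{1-}g_N\bigr\vert\le C\rho$ uniformly; the integrands are thus dominated by the fixed integrable function $\bigl\vert D^\alpha_{0+}f\bigr\vert\,\rho$, and dominated convergence gives $e^{i\pi\alpha}\int_0^1(D^\alpha_{0+}f)(D^{1-\alpha}_{1-}g_N)\to\int_0^1 f\,\mathrm{d}g$ computed with exponent $\alpha$, and likewise with $\alpha'$. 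Since the two approximating sequences share the common value $\int_0^1 f\,\mathrm{d}g_N$ for every $N$, their limits coincide.

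\textbf{Additivity.} For the splitting at $t$, I would insert $\int_0^1=\int_0^t+\int_t^1$ into the defining formula. On $[0,t]$ the left fractional derivative only sees $f|_{[0,t]}$, so $D^\alpha_{0+}f=D^\alpha_{0+}(f|_{[0,t]})$ there, while the integrator factor is rewritten through the elementary identity expressing $(D^{1-\alpha}_{1-}g_{1-})(x)$, for $x\in[0,t]$, as $(D^{1-\alpha}_{t-}g_{t-})(x)$ plus a tail term built from the boundary values at $t$ and $1$ and the integral $\int_t^1(g(x)-g(u))(u-x)^{\alpha-2}\,\mathrm{d}u$. A Fubini interchange shows that the contribution of this tail term is absorbed exactly into $\int_t^1 f\,\mathrm{d}g$, which is the weighted analogue of the additivity already established in \cite{zahle}. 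The interchange and the finiteness of all the pieces are guaranteed by the integrability facts at hand: $D^\alpha_{0+}f\in L^1[0,t]$ together with $D^{1-\alpha}_{t-}g_{t-}\in L^\infty[0,t]$ makes $\int_0^t f\,\mathrm{d}g$ a genuine integral in the sense of \cite{zahle}, while $D^\alpha_{t+}f\in L^1([t,1],\rho)$ together with the endpoint estimate from the first part makes $\int_t^1 f\,\mathrm{d}g$ well defined in the extended sense. The main difficulty here is one of bookkeeping, namely checking that the Fubini interchange is permissible under the weighted integrability, rather than a conceptual one.
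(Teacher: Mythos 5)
Your overall strategy coincides with the paper's: the Lemma~\ref{lemma1}-type bound $\vert(D^{1-\alpha}_{1-}g_{1-})(x)\vert\le C(1-x)^{\lambda+\alpha-1}\vert\log(1-x)\vert^{\nu}\le C\rho(x)$ for well-definedness, then mollification of the integrator via Lemma~\ref{lemma2} and dominated convergence to reduce $\alpha$-independence to the smooth $g_N$. But there is a genuine gap at the central step. Your claim that for smooth $g_N$ the pairing $e^{i\pi\alpha}\int_0^1(D^{\alpha}_{0+}f)(D^{1-\alpha}_{1-}g_N)\,\mathrm{d}x$ equals the classical integral $\int_0^1 f g_N'\,\mathrm{d}y$ ``by the construction in \cite{zahle}'' cannot be justified by citation: Z\"ahle's identification requires $D^{\alpha}_{0+}f\in L^1[0,1]$ (indeed $f\in I^{\alpha}_{0+}(L^p)$ on the whole interval), and this is exactly the hypothesis that fails under the weighted assumption $D^{\alpha}_{0+}f\in L^1([0,1],\rho)$ --- an obstruction you yourself point out one sentence earlier, which makes the appeal circular. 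The paper supplies the missing argument explicitly: for smooth $g_N$ one writes $D^{1-\alpha}_{1-}g_N=I^{\alpha}_{1-}g_N'$, checks the Fubini interchange via $\int_0^1\vert(D^{\alpha}_{0+}f)(x)\vert\int_x^1(y-x)^{\alpha-1}\vert g_N'(y)\vert\,\mathrm{d}y\,\mathrm{d}x\le C_N\int_0^1\vert(D^{\alpha}_{0+}f)(x)\vert(1-x)^{\alpha}\,\mathrm{d}x<\infty$ (here $(1-x)^{\alpha}\le C\rho(x)$ because $\lambda<1$, so the weighted integrability suffices), and then uses $I^{\alpha}_{0+}D^{\alpha}_{0+}f=f$ a.e.\ from \cite[Theorem~2.4]{samko}, which is legitimate because $D^{\alpha}_{0+}f\in L^1[0,t]$ for every $t<1$ (the weight is bounded below away from $1$). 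Without this computation your proof of $\alpha$-independence is incomplete.

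For additivity you then abandon the mollification machinery and propose splitting the kernel of $D^{1-\alpha}_{1-}g_{1-}$ on $[0,t]$ into $D^{1-\alpha}_{t-}g_{t-}$ plus a tail, asserting that the tail's contribution is ``absorbed exactly'' into $\int_t^1 f\,\mathrm{d}g$. That assertion hides the entire content of the statement: the integral $\int_t^1 f\,\mathrm{d}g$ in the extended sense is built from $D^{\alpha}_{t+}f$, not from $D^{\alpha}_{0+}f$ restricted to $[t,1]$, so one must prove the cancellation between the tail term on $[0,t]$ and the correction $D^{\alpha}_{0+}f-D^{\alpha}_{t+}f$ on $[t,1]$, under weighted integrability only; this is not bookkeeping and is left unverified. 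The paper's route is both simpler and complete, and reuses what you already set up: repeating the Fubini/mollification argument gives $\int_t^1 f\,\mathrm{d}g=\lim_{N}\int_t^1 f g_N'\,\mathrm{d}y$, the proof of \cite[Theorem~2.5]{zahle} gives $\int_0^t f\,\mathrm{d}g=\lim_{N}\int_0^t f g_N'\,\mathrm{d}y$, and additivity is then inherited from the additivity of the Lebesgue integral. You should replace your kernel-splitting argument by this limiting one.
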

\begin{proof}
From Lemma~\ref{lemma1} we have
\[
\big\vert\bigl(D_{1-}^{1-\alpha}g\bigr) (x)\big\vert \le C(1-x)^{\lambda+
\alpha- 1}
\big\vert\log(1-x)\big\vert^{\nu}\le C\rho(x),
\]
whence the finiteness of the integral follows. Defining $g_N,N\geq1$,
as in Lemma~\ref{lemma2}, we have
%
%
\begin{align}
&e^{i\pi\alpha}\int_0^1\bigl(D^{\alpha}_{1-}f\bigr) (x) \bigl
(D^{1-\alpha}_{1-}g_{1-}\bigr) (x)\mathrm{d}x\nonumber\\
&\quad= e^{i\pi\alpha}\lim_{N\to\infty} \int_0^1\bigl(D^{\alpha
}_{1-}f\bigr) (x) \bigl(D^{1-\alpha}_{1-}g_{N}\bigr) (x)\mathrm
{d}x \label
{int-conv}
\end{align}
%
%
in view of the dominated convergence theorem.

Since $g_N\in C^\infty [0,1]\to \mathbb R$, it follows from the properties of
fractional integrals and derivatives (see \cite{samko,zahle}) that
$
(D^{1-\alpha}_{1-} g_N)(x) = (I^\alpha_{1-} g'_N)(x),
$
where
\[
\bigl(I^\alpha_{1-} h\bigr) (x) = \frac{e^{-i\pi\alpha
}}{\varGamma(\alpha)}\int
_x^1 (y-x)^{\alpha- 1} h(y)\mathrm{d}y
\]
is the right-sided fractional Riemann--Liouville integral of order
$\alpha$.
Note that
\begin{align*}
&\int_0^1 \int_x^1\big\vert\bigl(D^{\alpha}_{1-}f\bigr) (x)\big\vert
(y-x)^{\alpha- 1}\big\vert g_N'(y)\big\vert\mathrm{d}y\, \mathrm{d}x\\
&\quad\le C_N \int_0^1 \big\vert\bigl(D^{\alpha}_{1-}f\bigr) (x)\big\vert
(1-x)^{\alpha}\mathrm{d}x\le C_N \int_0^1 \big\vert\bigl(D^{\alpha
}_{1-}f\bigr) (x)\big\vert \rho(x)\mathrm{d}x<\infty
\end{align*}
thanks to our assumptions.

Hence, by the Fubini theorem,
\begin{align*}
&e^{i\pi\alpha}\int_0^1\bigl(D^{\alpha}_{1-}f\bigr) (x) \bigl
(D^{1-\alpha}_{1-}g_{N}\bigr) (x)\mathrm{d}x\\
&\quad= \frac{1}{\varGamma(\alpha)}\int_0^1 \int_0^y
(y-x)^{1-\alpha}\bigl(D^{\alpha}_{1-}f\bigr) (x)\mathrm{d}x\,
g_N'(y)\mathrm{d}y\\
&\quad= \int_0^1 \bigl(I^\alpha_{0+}D^\alpha_{0+} f\bigr)
(y)g_N'(y)\mathrm{d}y,
\end{align*}
where
\[
\bigl(I^\alpha_{0+} h\bigr) (x) = \frac{1}{\varGamma(\alpha)}\int
_0^x (x-y)^{\alpha-
1} h(y)\mathrm{d}y
\]
is the left-sided fractional Riemann--Liouville integral of order
$\alpha$.
Note that $D^\alpha_{0+} f\in L^1[0,t]$ for any $t\in(0,1)$, so by
\cite
[Theorem~2.4]{samko}, for almost all $y\in[0,t]$, we have
$(I^\alpha_{0+} D^\alpha_{0+} f)(y) = f(y)$. Therefore, this equality
holds for almost all $y\in[0,1]$, and
\begin{gather*}
e^{i\pi\alpha}\int_0^1\bigl(D^{\alpha}_{1-}f
\bigr) (x) \bigl(D^{1-\alpha}_{1-}g_{N}\bigr) (x)\mathrm{d} x=
\int_0^1 f(y)g_N'(y)
\mathrm{d}y.
\end{gather*}
Taking into account \eqref{int-conv}, we get that, indeed, the extended
fractional integral $\int_0^1 f(s)$ does not depend on $\alpha$.

Concerning additivity, under the assumption $D^{\alpha}_{t+} f\in
L^1([t,1], \rho)$, repeating the previous argument, we get
\[
\int_t^1 f(x) \mathrm{d}g(x) = \lim
_{N\to\infty} \int_t^1
f(y)g_N'(y)\mathrm{d}y.
\]
On the other hand, from the proof of \cite[Theorem~2.5]{zahle} we have
\[
\int_0^t f(x) \mathrm{d}g(x) = \lim
_{N\to\infty} \int_0^t
f(y)g_N'(y)\mathrm{d}y.
\]
Therefore,
\[
\int_0^t f(x) \mathrm{d}g(x) + \int
_t^1 f(x) \mathrm{d}g(x) = \lim_{N\to\infty}
\int_0^1 f(y)g_N'(y)
\mathrm{d}y = \int_0^1 f(x) \mathrm{d}g(x),
\]
as required.
\end{proof}

\section{Appendix. Proof of Lemma~2}
\begin{proof}
We have $\mathsf E [\Delta B^H_i \Delta B^H_j] = \rho_H(\vert i-j\vert
)$ with
\[
\rho_H(m) = \frac{1}2 \bigl((m+1)^{2H} +
\vert m-1\vert^{2H} - 2 m^{2H} \bigr), \quad m\ge0.
\]
It is easy to see that
%
%
\begin{equation}
\label{rhoHasym} \rho_H(m)\sim H(2H-1)m^{2H-2}, \quad m\to\infty.
\end{equation}
The double sum in the question can be transformed as follows:
\begin{align*}
S^H_n:=\sum_{i,j=0}^{n-1}\bigl( \mathsf{E}\big[\Delta B^H_i \Delta
B^H_j\big]\bigr
)^2 &= \sum_{i,j=0}^{n-1}\rho_H\big(\vert i-j\vert\big)^2 = \vert m=i-j\vert
\\
&= \sum_{m=1-n}^{n-1} \big(n-\vert m\vert\big)\rho_H\big(|m|\big)^2.
\end{align*}

Let $H\in(1/2,3/4)$. Then
\begin{gather*}
\frac{S_n^H}n = \sum_{m=1-n}^{n-1}
\biggl(1-\frac{\vert m\vert}{n} \biggr)\rho_H\big(|m|\big)^2.
\end{gather*}
In view of \eqref{rhoHasym}, the series $\sum_{m = -\infty}^{+\infty}
\rho_H(|m|)^2$ converges, so by the dominated convergence theorem,
\[
\frac{S_n^H}n\to\sum_{m = -\infty}^{+\infty}
\rho_H(|m|)^2, \quad n\to\infty.
\]

Now let $H\in[3/4,1)$. Define $a_n = n\log n$ if $H=3/4$, $a_n =
n^{4H-2}$ if $H\in(3/4,1)$. Applying the Stolz--C\`esaro theorem, we have
\[
\lim_{n\to\infty}\frac{S_n^H}{a_n} = \lim_{n\to\infty}
\frac{S_{n+1}^H -
S_n^H}{a_{n+1}-a_n} = \lim_{n\to\infty}\frac{1}{a_{n+1}-a_n}\sum
_{m=-n}^n \rho_H\big(|m|\big)^2,
\]
provided that the latter limit exists. Using the Stolz--C\`esaro
theorem once more, we have
\[
\lim_{n\to\infty}\frac{1}{a_{n+1}-a_n}\sum
_{m=-n}^n \rho_H\big(|m|\big)^2 = 2
\lim_{n\to\infty}\frac{\rho_H(n)^2}{a_{n+1}+a_{n-1}- 2a_n},
\]
provided that the latter limit exists.
It is easy to see that
\[
a_{n+1}+a_{n-1}- 2a_n\sim%
\begin{cases}
n^{-1}, \quad H = 3/4,\\
(4H-2)(4H-3)n^{4H-4}, \quad H\in(3/4,1),
\end{cases}
\]
as $n\to\infty$. Thus, we get the required statement thanks to \eqref
{rhoHasym}.
\end{proof}

\bibliographystyle{vmsta-mathphys}
%

%
\end{document}